\newtheorem{theorem}{Theorem}
\newtheorem{definition}[theorem]{Definition}
\newtheorem{example}[theorem]{Example}
\newenvironment{proof}[1][Proof]{\noindent \textbf{#1.} }{\  \rule{0.5em}{0.5em}}
\begin{document}

\begin{center}
\textbf{General form of the solutions of some difference equations via Lie
symmetry analysis}\\[0pt]
\vspace{1cm} Mensah Folly-Gbetoula, Melih G\"{o}cen, Mira\c{c} G\"{u}neysu%
\\[0pt]
\vspace{1cm} $^{1}$School of Mathematics, University of the Witwatersrand,
Wits 2050, Johannesburg, South Africa,

Mensah.Folly-Gbetoula@wits.ac.za\\
$^{2,3}$Department of Mathematics, Faculty of Arts and Sciences, Bulent
Ecevit University, Zonguldak, Turkey,

gocenm@hotmail.com, gnsu\_mrc@hotmail.com \vspace{1cm}

\textbf{Abstract}
\end{center}

In this paper, we obtain exact solutions of the following rational
difference equation

\begin{equation*}
x_{n+1}=\frac{x_{n}x_{n-2}x_{n-4}}{%
x_{n-1}x_{n-3}(a_{n}+b_{n}x_{n}x_{n-2}x_{n-4})},  \label{denklem}
\end{equation*}%
where $a_{n}$ and $b_{n}$ are random real sequences, by using the technique of Lie symmetry analysis. Moreover, we discuss the periodic nature and behavior of solutions  for some special cases. This work is
a generalization of some works by Elsayed and Ibrahim in [E.M.Elsayed, T. F.
Ibrahim, { Solutions and periodicity of a rational recursive
sequences of order five}, \textit{Bulletin of the Malaysian Mathematical Sciences
Society} \textbf{38:1} (2015), 95-112].

Keywords: Difference equation, symmetry, reduction, periodicity, asymptotic behavior

\noindent\ \ \ \ \ Mathematics Subject Classification: $39\mathrm{A}10,39%
\mathrm{A}99,39\mathrm{A}13$

\section{Introduction}

Recently, there has been a considerable interest in studying the dynamics of
rational difference equations. One of the reasons is that difference
equations have many applications in several mathematical models in biology,
ecology, physics, economics, genetics, population dynamics, medicine,
physiology and so forth, see for example \textbf{[\ref{bib.din1}], [\ref%
{bib.gocen1}], [\ref{bib.papaschinopoulos1}], [\ref{bib.zhou1}]}.
Furthermore, symmetry methods for differential equations especially Lie
symmetry analysis method have been investigated by several researchers. The
method of Lie symmetry has been applied to difference equations in the past
years and interesting results has been made by the authors (see \textbf{[\ref%
{bib.Haydon1}], [\ref{bib.Levi1}], [\ref{bib.quispel1}]}). The area of Lie
symmetry analysis is a very rich research field.

The Norwegian mathematician Sophus Lie studied the group of mappings which
leaves the differential equations invariant \textbf{[\ref{bib.Lie1}]}$.$
With this approach, one can solve difference equations using the group of
transformations that leaves the equations invariant, similarly. After the
inspiring works of Lie, the invariance properties of these equations
under groups of point transformations attracted great interest. The symmetry
method has been used to obtain the form of the solutions of difference
equations, see for example \textbf{[\ref{bib.Folly11}], [\ref{bib.Folly22}],
[\ref{bib.Folly33}], [\ref{bib.Nyirenda1}].} Moreover, Maeda \textbf{[\ref%
{bib.Maeda1}]} showed how to use symmetry methods and get the solutions of
the system of first-order ordinary difference equations.

In \textbf{[\ref{bib.Ibrahim1}]}, the author investigated the solutions and
properties of the difference equation 
\begin{equation*}
x_{n+1}=\frac{x_{n}x_{n-2}}{x_{n-1}(a+bx_{n}x_{n-2})}\text{ }\quad
\end{equation*}%
where initial values are nonnegative real numbers.

E.M. Elsayed and T. F. Ibrahim \textbf{[\ref{bib.Elsayed1}]} obtained the
solutions of the following difference equations of order five 
\begin{equation*}
x_{n+1}=\frac{x_{n}x_{n-2}x_{n-4}}{x_{n-1}x_{n-3}(\pm 1\pm
x_{n}x_{n-2}x_{n-4})},n=0,1,2,...
\end{equation*}
where the initial conditions $x_{-4},x_{-3},$ $x_{-2},x_{-1}$ and $x_{0}$
are arbitrary real numbers.

Our aim in this paper is to give the form of the solutions of the following
difference equation%
\begin{equation}  \label{xn}
x_{n+1}=\frac{x_{n}x_{n-2}x_{n-4}}{%
x_{n-1}x_{n-3}(a_{n}+b_{n}x_{n}x_{n-2}x_{n-4})}
\end{equation}%
in closed form, where $(a_{n})_{n\in \mathbb{N}_{0}},\ (b_{n})_{n\in \mathbb{%
N}_{0}}$ are non-zero real sequences, via the technique of Lie group
analysis. In this work, we are motivated by the results of E.M. Elsayed and T.
F. Ibrahim \textbf{[\ref{bib.Elsayed1}]}$.$

For the sake of convenience, we instead investigate the Kovalevskaya form of %
\eqref{xn}: 
\begin{equation}  \label{kov}
u_{n+5}=\frac{u_{n}u_{n+2}u_{n+4}}{%
u_{n+1}u_{n+3}(A_{n}+B_{n}u_{n}u_{n+2}u_{n+4})}.
\end{equation}

\section{ Preliminaries}

We begin by introducing some basic definitions and theorems needed in
the sequel. For details, see \textbf{[\ref{bib.Haydon1}].}

\begin{definition}
Let $G$ be a local group of transformations acting on a manifold $M$. $A$
subset $S\subset M$ is called $G$-invariant, and $G$ is called symmetry
group of $S$ , if whenever $x\in S$ , and $g\in G$ is such that $g.x$ is
defined, then $g.x\in S.$
\end{definition}

\begin{definition}
Let $G$ be a connected group of transformations acting on a manifold $M$. A
smooth real-valued function $V:M\rightarrow 
%TCIMACRO{\U{211d} }%
%BeginExpansion
\mathbb{R}
%EndExpansion
$ is an invariant function for $G$ if and only if 
\begin{equation}
X\left( V\right) =0\ \ \ \ for\text{ }all\ \ \ \ x\in M,
\end{equation}%
and every infinitesimal generator $X$ of $G$ .
\end{definition}

\begin{definition}
A parameterized set of point transformations, 
\begin{equation}
\Gamma _{\varepsilon }:x\rightarrow \hat{x}\left( x;\varepsilon \right) ,
\end{equation}%
where $x=x_{i}$, $i=1,...,p$ are continuous variables, is a one-parameter
local Lie group of transformations as long as the following conditions are
met:

\begin{description}
\item[a.] $\Gamma _{0}$ is the identity map if $\hat{x}=x$ when $\varepsilon
=0.$

\item[b.] $\Gamma _{a}\Gamma _{b}=\Gamma _{a+b}$ for every $a$ and $b$
sufficiently close to $0$.

\item[c.] Each $\hat{x}_{i}$ can be represented as a Taylor series (in a
neighborhood of $\varepsilon =0$ that is determined by $x$), and therefore%
\begin{equation*}
\hat{x}_{i}\left( x;\varepsilon \right) =x_{i}+\varepsilon \xi _{i}\left(
x\right) +O(\varepsilon ^{2}),i:1,...,p.
\end{equation*}
\end{description}
\end{definition}

Consider the $pth$-order difference equation%
\begin{equation}
u_{n+p}=\Omega \left( u_{n},...,u_{n+p-1}\right) ,  \label{bitsin}
\end{equation}%
for some smooth function $\Omega .$ Assume the point transformations are of
the form 
\begin{equation}
\hat{n}=n;\text{ \ \ \ \ \ \ \ \ \ \ \ \ \ \ }\hat{u}_{n}=u_{n}+\varepsilon
Q\left( n,u_{n}\right) O(\varepsilon ^{2}) \label{off}
\end{equation}%
with the corresponding infinitesimal symmetry generator%
\begin{equation*}
X=Q\left( n,u_{n}\right) \frac{\partial }{\partial u_{n}}+S^{(1)}Q\left(
n,u_{n}\right) \frac{\partial }{\partial u_{n+1}}+,,,+S^{(p-1)}Q\left(
n,u_{n}\right) \frac{\partial }{\partial u_{n+p-1}},
\end{equation*}%
where $S^{(j)}$ is the shift operator, i.e., $S^{(j)}:n\rightarrow n+j.$ The
symmetry condition is defined as 
\begin{equation}
\hat{u}_{n+p}=\Omega \left( n,\hat{u}_{n},\hat{u}_{n+1},...,\hat{u}%
_{n+p-1}\right) ,  \label{mirac}
\end{equation}%
whenever $\left( \ref{bitsin}\right) $ is true. Substituting the Lie point
symmetries $\left( \ref{off}\right) $ into the symmetry condition $\left( %
\ref{mirac}\right) $ leads to the linearized symmetry condition%
\begin{equation}
S^{(p)}Q-X\Omega =0|_{u_{n+p}=\Omega \left( u_{n},...,u_{n+p-1}\right).}  \label{yoruldum}
\end{equation}%
%whenever $\left( \ref{bitsin}\right) $ is holds. 
%Upon knowledge of the characteristic $Q$ , it is possible to obtain the invariant
%$V$ by introducing the canonical coordinate [\ref{bib.joshi1}]%
%\begin{equation*}
%S_{n}=\int \frac{du_{n}}{Q\left( n,u_{n}\right) }.
%\end{equation*}

\section{Reduction and solutions}

Consider difference equations of the form $\left( \ref{kov}\right) $, i.e.,%
\begin{equation}
u_{n+5}=\Omega =\frac{u_{n}u_{n+2}u_{n+4}}{%
u_{n+1}u_{n+3}(A_{n}+B_{n}u_{n}u_{n+2}u_{n+4})}.  \label{kalem}
\end{equation}%
To get the symmetry algebra of equation $\left( \ref{kalem}\right) ,$we
apply the invariance criterion $\left( \ref{yoruldum}\right) $ to $\left( %
\ref{kalem}\right) $ to get%
\begin{eqnarray}
&&Q(n+5,\Omega )-\frac{u_{n+2}u_{n+4}A_{n}}{%
u_{n+1}u_{n+3}(A_{n}+B_{n}u_{n}u_{n+2}u_{n+4})^{2}}Q(n,u_{n})  \notag \\
&&+\frac{u_{n+2}u_{n+4}u_{n}}{%
u_{n+3}u_{n+1}^{2}(A_{n}+B_{n}u_{n}u_{n+2}u_{n+4})}Q(n+1,u_{n+1})  \notag \\
&&-\frac{u_{n+4}u_{n}A_{n}}{%
u_{n+1}u_{n+3}(A_{n}+B_{n}u_{n}u_{n+2}u_{n+4})^{2}}Q(n+2,u_{n+2})  \notag \\
&&+\frac{u_{n+4}u_{n+2}u_{n}}{%
u_{n+1}u_{n+3}^{2}(A_{n}+B_{n}u_{n}u_{n+2}u_{n+4})}Q(n+3,u_{n+3})  \notag \\
&&-\frac{u_{n+2}u_{n}A_{n}}{%
u_{n+1}u_{n+3}(A_{n}+B_{n}u_{n}u_{n+2}u_{n+4})^{2}}Q(n+4,u_{n+4})=0.
\label{overwhelm}
\end{eqnarray}%
By acting the partial differential operator 
\begin{equation*}
L=\frac{\partial }{\partial u_{n}}-\frac{\partial u_{n+3}}{\partial u_{n}}%
\frac{\partial }{\partial u_{n+3}}=\frac{\partial }{\partial u_{n}}-\frac{%
(\partial \Omega /\partial u_{n})}{(\partial \Omega /\partial u_{n+3})}\frac{%
\partial }{\partial u_{n+3}},
\end{equation*}%
on \eqref{overwhelm}, we have%
\begin{align}
&\left(2u_{n+2}u_{n}B_{n}+\frac{A_{n}}{u_{n}}%
\right)Q-(A_{n}+B_{n}u_{n}u_{n+2}u_{n+4})Q^{^{\prime }}
+(B_{n}u_{n}u_{n+4})S^{(2)}Q  \notag \\
&-\frac{\left( A_{n}+B_{n}u_{n}u_{n+2}u_{n+4}\right) }{u_{n+3}}%
S^{(3)}Q^{^{\prime }}+\left( B_{n}u_{n}u_{n+2}\right) S^{(4)}Q=0.
\label{kiraz}
\end{align}%
The notation $\prime $ stands for the derivative with respect to the
continuous variable. Differentiation of $\left( \ref{kiraz}\right) $ with
respect to $u_{n}$ twice, while $u_{n+2},$ $u_{n+3}$ and $u_{n+4}$ are
fixed, yields the equation%
\begin{equation}
\frac{2A_{n}}{u_{n}^{3}}Q-\frac{2A_{n}}{u_{n}^{2}}Q^{^{\prime }}+\frac{A_{n}%
}{u_{n}}Q^{\prime \prime }-A_{n}Q^{^{\prime \prime \prime
}}-(B_{n}u_{n}u_{n+2}u_{n+4})Q^{^{\prime \prime \prime }}=0.  \label{hacer}
\end{equation}%
The equation above is solved by separation of variables in powers of shifts
of $u_{n}.$ Thus, we have the system 
\begin{eqnarray*}
1 &:&Q^{^{\prime \prime \prime }}(n,u_{n})-\frac{1}{u_{n}}Q^{\prime \prime
}(n,u_{n})+\frac{2}{u_{n}^{2}}Q^{^{\prime }}(n,u_{n})-\frac{2}{u_{n}^{3}}%
Q(n,u_{n})=0 \\
u_{n+2}u_{n+4} &:&Q^{^{\prime \prime \prime }}(n,u_{n})=0
\end{eqnarray*}%
whose solution is as follows:%
\begin{equation}
Q(n,u_{n})=\alpha _{n}u_{n}^{2}+\beta _{n}u_{n},  \label{yagmur}
\end{equation}%
for some arbitrary functions $\alpha _{n}$ and $\beta _{n}$ of $n$ to be
found. Substituting $\left( \ref{yagmur}\right) $ and its first, second and
third shifts in $\left( \ref{overwhelm}\right) $ , and thereafter,
replacing the expression of $u_{n+5}$ given in $\left( \ref{kalem}\right) $
in the obtained equation, it follows that 
\begin{align}
&u_{n}^{2}u_{n+2}^{2}u_{n+4}^{2}\alpha _{n+5}-A_{n}\alpha
_{n}u_{n}^{2}u_{n+1}u_{n+2}u_{n+3}u_{n+4}+A_{n}\alpha
_{n+1}u_{n}u_{n+1}^{2}u_{n+2}u_{n+3}u_{n+4}  \notag \\
&-A_{n}\alpha _{n+2}u_{n}u_{n+1}u_{n+2}^{2}u_{n+3}u_{n+4}+A_{n}\alpha
_{n+3}u_{n}u_{n+1}u_{n+2}u_{n+3}^{2}u_{n+4}  \notag \\
&-A_{n}\alpha _{n+4}u_{n}u_{n+1}u_{n+2}u_{n+3}u_{n+4}^{2} +\alpha
_{n+1}B_{n}u_{n}^{2}u_{n+2}^{2}u_{n+4}^{2}u_{n+3}u_{n+1}^{2}  \notag \\
&+\alpha
_{n+3}B_{n}u_{n}^{2}u_{n+2}^{2}u_{n+4}^{2}u_{n+3}^{2}u_{n+1}+A_{n}u_{n}u_{n+1}u_{n+2}u_{n+3}u_{n+4}(\beta _{n+5}-\beta _{n}+\beta _{n+1}
\notag \\
&-\beta _{n+2}+\beta _{n+3}-\beta _{n+4})
+B_{n}u_{n}^{2}u_{n+2}^{2}u_{n+4}^{2}u_{n+1}u_{n+3}(\beta _{n+5}+\beta
_{n+1}+\beta _{n+3})=0.
\end{align}%
Equating coefficients of all powers of shifts of $u_{n}$ to zero and
simplifying the resulting system , we get its reduced form%
\begin{equation}
\alpha _{n}=0,\text{ \ \ \ \ \ \ \ \ }\beta _{n}+\beta _{n+2}+\beta _{n+4}=0
\label{cay}
\end{equation}%
and its solutions are%
\begin{equation*}
\alpha _{n}=0\text{\ \ \ \ \ and \ \ \ \ }\beta _{n}=(-1)^{n}\beta
^{n}c_{1}+(-1)^{n}\overline{\beta }^{n}c_{2}+\overline{\beta }%
^{n}c_{3}+\beta ^{n}c_{4}
\end{equation*}%
for some arbitrary constants $c_{i},i=1,...,4$, and where $\beta =\exp
\left( \pi i/3\right) .$ So, we get four characteristics given by%
\begin{align}
&Q_{1}(n,u_{n})=(-1)^{n}\beta ^{n}u_{n},Q_{2}(n,u_{n})=(-1)^{n}\overline{%
\beta }^{n}u_{n},Q_{3}(n,u_{n})=\overline{\beta }^{n}u_{n},  \notag \\
&Q_{4}(n,u_{n})=\beta ^{n}u_{n}.
\end{align}%
The four corresponding symmetry generators $X_{1},X_{2},X_{3}$ and $X_{4}$
are as follows:%
\begin{align}
& X_{1}=\sum_{j=0}^{4}(-\beta)^{n+j}u_{n+j}\partial u_{n+j},\quad
X_{2}=\sum_{j=0}^{4}(-\bar{\beta})^{n+j}u_{n+j}\partial u_{n+j},  \notag \\
&X_{3}=\sum_{j=0}^{4}(\bar{\beta})^{n+j}u_{n+j}\partial u_{n+j},\quad
X_{4}=\sum_{j=0}^{4}(\beta)^{n+j}u_{n+j}\partial u_{n+j}.
\end{align}%
Here, using $Q_{4}$, we introduce the canonical coordinate [\ref{bib.joshi1}]%
\begin{equation}
S_{n}=\int \frac{du_{n}}{Q_{4}(n,u_{n})}=\int \frac{du_{n}}{\beta ^{n}u_{n}}=%
\frac{1}{\beta ^{n}}\ln \left \vert u_{n}\right \vert .  \label{star}
\end{equation}%
Using relation $\left( \ref{cay}\right) $, we derive the function 
\begin{subequations}
\label{serapsevim}
\begin{equation}
\widetilde{V}_{n}=S_{n}\beta ^{n}+S_{n+2}\beta ^{n+2}+S_{n+4}\beta ^{n+4}
\label{serap}
\end{equation}%
%
%
%
%
%It is easy to check that
%\begin{equation*}
%X_{i}(\widetilde{V}_{n})=\beta ^{n+4}+\beta ^{n+2}+\beta ^{n}=0\text{ \ \ \
%\ }i=1,2,3,4,
%\end{equation*}%
%For the sake of convenience, we use%
and let 
\begin{equation}
\left \vert V_{n}\right \vert =\exp \left \{ -\widetilde{V}_{n}\right \} .
\label{sevim}
\end{equation}%
In other words, $V_{n}=\pm 1/(u_{n}u_{n+2}u_{n+4}).$ One can show by using $%
\left( \ref{kalem}\right) $ and $\left( \ref{sevim}\right) $ that 
\end{subequations}
\begin{eqnarray}
V_{n+1} &=&A_{n}V_{n}+B_{n},
\end{eqnarray}
that is, 
\begin{eqnarray}
V_{n} &=&V_{0}\underset{k_{1}=0}{\overset{n-1}{\prod }}A_{k_{1}}+\overset{n-1%
}{\underset{l=0}{\sum }}B_{l}\underset{l+1=0}{\overset{n-1}{\prod }}%
A_{k_{2}}.  \label{buneya}
\end{eqnarray}%
Here, we first use $\left( \ref{star}\right) $ to have%
\begin{equation*}
\left \vert u_{n}\right \vert =\exp (S_{n}\beta ^{n}).
\end{equation*}%
Then, invoking $\left( \ref{serapsevim}\right) $ yields%
\begin{eqnarray}
\left \vert u_{n}\right \vert&=&|H_{n}|\exp \left[ \frac{2\sqrt{3}}{3}%
\overset{n-1}{\underset{k=0}{\sum }}\left( \cos \frac{(n-k)\pi }{2}\cos 
\frac{(n-k+1)\pi }{6}\right) \ln \left \vert V_{k}\right \vert \right] ,
\label{secil}
\end{eqnarray}%
where $V_{k}$ is given in $\left( \ref{buneya}\right) $ 
%and  $H_{n}=\exp \left((-\beta )^{n}c_{5}+(\beta )^{n}c_{6}+(\bar{\beta})^{n}c_{7}+(-\bar{\beta}%)^{n}c_{8}\right) $ , $\gamma \left( n,k\right) =\cos \frac{(n-k)\pi }{2}%\cos \frac{(n-k+1)\pi }{6}$
and note that the $H_n$'s satisfy%
\begin{equation*}
H_{0}=x_{0},\text{ \ }H_{1}=x_{1},\text{ \ }H_{2}=x_{2},\text{ \ }%
H_{3}=x_{3},\text{ \ }H_{4}=\frac{1}{x_{0}x_{2}},\text{ \ }H_{5}=\frac{1}{%
x_{1}x_{3}},\text{ \ }H_{6n+j}=H_{j}.
\end{equation*}%
We can simplify the solution $\left( \ref{secil}\right) $ further by
splitting it into six categories. We have
\begin{eqnarray}
\left \vert u_{6n}\right \vert &=&H_{6n}\exp \left[ \frac{2\sqrt{3}}{3}%
\overset{6n-1}{\underset{k=0}{\sum }}\left( \cos \frac{(6n-k)\pi }{2}\cos 
\frac{(6n-k+1)\pi }{6}\right) \ln \left \vert V_{k}\right \vert \right] 
\notag \\
&=&H_{0}\exp \left( \ln V_{0}-\ln V_{2}+\ln V_{6}-\ln V_{8}+...+\ln
V_{6n-6}-\ln V_{6n-4}\right)  \notag \\
&=& x_{0} \overset{n-1}{\underset{k=0}{\prod }}\left \vert \frac{V_{6k}}{%
V_{6k+2}}\right \vert.  \label{gece}
\end{eqnarray}%
%
%
%
%
%To find $\exp \left( H_{0}\right) $ , set $n=0$ in $\left( \ref{gece}\right)
%$ and note that $\left \vert u_{0}\right \vert =\exp \left( H_{0}\right) .$
Thus%
\begin{equation}
u_{6n}=u_{0}\underset{k=0}{\overset{n-1}{\prod }}\frac{V_{6k}}{V_{6k+2}}.
\label{ay}
\end{equation}%
It can be verified, using $V_{n}=1/(u_{n}u_{n+2}u_{n+4}),$ that there is
no need \ for absolute value function in $\left( \ref{ay}\right) $.
Similarly, for any $j=0,\dots, 5,$ we obtain the following:%
\begin{equation}
u_{6n+j}=u_{j}\underset{k=0}{\overset{n-1}{\prod }}\frac{V_{6k+j}}{V_{6k+j+2}%
}.  \label{hahahaa}
\end{equation}%
For $j=0,$ we get%
\begin{eqnarray*}
u_{6n} &=&u_{0}\underset{k=0}{\overset{n-1}{\prod }}\frac{V_{6k}}{V_{6k+2}}
\\
&=&u_{0}\underset{k=0}{\overset{n-1}{\prod }}\frac{\left(\underset{k_{1}=0}{%
\overset{6k-1}{\prod }}A_{k_{1}}\right)+u_{0}u_{2}u_{4}\overset{6k-1}{\underset{l=0}%
{\sum }}\left(B_{l}\underset{k_{2}=l+1}{\overset{6k-1}{\prod }}A_{k_{2}}\right)}{\left(\underset%
{k_{1}=0}{\overset{6k+1}{\prod }}A_{k_{1}}\right)+u_{0}u_{2}u_{4}\overset{6k+1}{%
\underset{l=0}{\sum }}\left(B_{l}\underset{k_{2}=l+1}{\overset{6k+1}{\prod }}%
A_{k_{2}}\right)}.
\end{eqnarray*}%
For $j=1,$ we have%
\begin{eqnarray*}
u_{6n+1} &=&u_{1}\underset{k=0}{\overset{n-1}{\prod }}\frac{V_{6k+1}}{%
V_{6k+3}} \\
&=&u_{1}\underset{k=0}{\overset{n-1}{\prod }}\frac{\left(\underset{k_{1}=0}{%
\overset{6k}{\prod }}A_{k_{1}}\right)+u_{0}u_{2}u_{4}\overset{6k}{\underset{l=0}{%
\sum }}\left(B_{l}\underset{k_{2}=l+1}{\overset{6k}{\prod }}A_{k_{2}}\right)}{\left(\underset{%
k_{1}=0}{\overset{6k+2}{\prod }}A_{k_{1}}\right)+u_{0}u_{2}u_{4}\overset{6k+2}{%
\underset{l=0}{\sum }}\left(B_{l}\underset{k_{2}=l+1}{\overset{6k+2}{\prod }}%
A_{k_{2}}\right)}.
\end{eqnarray*}%
For $j=2,$ we have%
\begin{eqnarray*}
u_{6n+2} &=&u_{2}\underset{k=0}{\overset{n-1}{\prod }}\frac{V_{6k+2}}{%
V_{6k+4}} \\
&=&u_{2}\underset{k=0}{\overset{n-1}{\prod }}\frac{\left(\underset{k_{1}=0}{%
\overset{6k+1}{\prod }}A_{k_{1}}\right)+u_{0}u_{2}u_{4}\overset{6k+1}{\underset{l=0}%
{\sum }}\left(B_{l}\underset{k_{2}=l+1}{\overset{6k+1}{\prod }}A_{k_{2}}\right)}{\left(\underset%
{k_{1}=0}{\overset{6k+3}{\prod }}A_{k_{1}}\right)+u_{0}u_{2}u_{4}\overset{6k+3}{%
\underset{l=0}{\sum }}\left(B_{l}\underset{k_{2}=l+1}{\overset{6k+3}{\prod }}%
A_{k_{2}}\right)}.
\end{eqnarray*}%
For $j=3,$ we have%
\begin{eqnarray*}
u_{6n+3} &=&u_{3}\underset{k=0}{\overset{n-1}{\prod }}\frac{V_{6k+3}}{%
V_{6k+5}} \\
&=&u_{3}\underset{k=0}{\overset{n-1}{\prod }}\frac{\left(\underset{k_{1}=0}{%
\overset{6k+2}{\prod }}A_{k_{1}}\right)+u_{0}u_{2}u_{4}\overset{6k+2}{\underset{l=0}%
{\sum }}\left(B_{l}\underset{k_{2}=l+1}{\overset{6k+2}{\prod }}A_{k_{2}}\right)}{\left(\underset%
{k_{1}=0}{\overset{6k+4}{\prod }}A_{k_{1}}\right)+u_{0}u_{2}u_{4}\overset{6k+4}{%
\underset{l=0}{\sum }}\left(B_{l}\underset{k_{2}=l+1}{\overset{6k+4}{\prod }}%
A_{k_{2}}\right)}.
\end{eqnarray*}%
For $j=4,$ we have%
\begin{eqnarray*}
u_{6n+4} &=&u_{4}\underset{k=0}{\overset{n-1}{\prod }}\frac{V_{6k+4}}{%
V_{6k+6}} \\
&=&u_{4}\underset{k=0}{\overset{n-1}{\prod }}\frac{\left(\underset{k_{1}=0}{%
\overset{6k+3}{\prod }}A_{k_{1}}\right)+u_{0}u_{2}u_{4}\overset{6k+3}{\underset{l=0}%
{\sum }}\left(B_{l}\underset{k_{2}=l+1}{\overset{6k+3}{\prod }}A_{k_{2}}\right)}{\left(\underset%
{k_{1}=0}{\overset{6k+5}{\prod }}A_{k_{1}}\right)+u_{0}u_{2}u_{4}\overset{6k+5}{%
\underset{l=0}{\sum }}\left(B_{l}\underset{k_{2}=l+1}{\overset{6k+5}{\prod }}%
A_{k_{2}}\right)}.
\end{eqnarray*}%
Finally, for $j=5,$ we have%
\begin{eqnarray*}
u_{6n+5} &=&u_{5}\underset{k=0}{\overset{n-1}{\prod }}\frac{V_{6k+5}}{%
V_{6k+7}} \\
&=&u_{5}\underset{k=0}{\overset{n-1}{\prod }}\frac{\left(\underset{k_{1}=0}{%
\overset{6k+4}{\prod }}A_{k_{1}}\right)+u_{0}u_{2}u_{4}\overset{6k+4}{\underset{l=0}%
{\sum }}\left(B_{l}\underset{k_{2}=l+1}{\overset{6k+4}{\prod }}A_{k_{2}}\right)}{\left(\underset%
{k_{1}=0}{\overset{6k+6}{\prod }}A_{k_{1}}\right)+u_{0}u_{2}u_{4}\overset{6k+6}{%
\underset{l=0}{\sum }}\left(B_{l}\underset{k_{2}=l+1}{\overset{6k+6}{\prod }}%
A_{k_{2}}\right)}.
\end{eqnarray*}%
More compactly, 
% using $\left( \ref{buneya}\right) $ and $\left( \ref{hahahaa}%
%\right) $
we have the solutions of $\left( \ref{kalem}\right) $ as follows:%
\begin{equation*}
u_{6n+j}=u_{j}\underset{k=0}{\overset{n-1}{\prod }}\frac{\left(\underset{k_{1}=0}{%
\overset{6k+j-1}{\prod }}A_{k_{1}}\right)+u_{0}u_{2}u_{4}\overset{6k+j-1}{\underset{%
l=0}{\sum }}\left(B_{l}\underset{k_{2}=l+1}{\overset{6k+j-1}{\prod }}A_{k_{2}}\right)}{%
\left(\underset{k_{1}=0}{\overset{6k+j+1}{\prod }}A_{k_{1}}\right)+u_{0}u_{2}u_{4}\overset%
{6k+j+1}{\underset{l=0}{\sum }}\left(B_{l}\underset{k_{2}=l+1}{\overset{6k+j+1}{%
\prod }}A_{k_{2}}\right)},
\end{equation*}%
$j=0, 1, 2, 3, 4, 5$. Therefore the solutions of $\left( \ref{xn}%
\right) $ become%
\begin{equation}  \label{solxn}
x_{6n+j-4}=x_{j-4}\underset{k=0}{\overset{n-1}{\prod }}\frac{\left(\underset{%
k_{1}=0}{\overset{6k+j-1}{\prod }}a_{k_{1}}\right)+x_{-4}x_{-2}x_{0}\overset{6k+j-1}%
{\underset{l=0}{\sum }}\left(b_{l}\underset{k_{2}=l+1}{\overset{6k+j-1}{\prod }}%
a_{k_{2}}\right)}{\left(\underset{k_{1}=0}{\overset{6k+j+1}{\prod }}%
a_{k_{1}}\right)+x_{-4}x_{-2}x_{0}\overset{6k+j+1}{\underset{l=0}{\sum }}\left(b_{l}%
\underset{k_{2}=l+1}{\overset{6k+j+1}{\prod }}a_{k_{2}}\right)},0\leq j\leq 5.
\end{equation}

\subsection{The case $a_{n}$ and $b_{n}$ are 1-periodic sequences.}

In this case, $a_{n}=a$ and $b_{n}=b$ where $a,\ b\in \mathbb{R}$. Then,
equations in \eqref{solxn} simplify to 
\begin{equation}  \label{solxncst}
x_{6n+j-4}=x_{j-4}\underset{k=0}{\overset{n-1}{\prod }}\frac{%
a^{6k+j}+x_{-4}x_{-2}x_{0}b\left(\overset{6k+j-1}{\underset{l=0}{\sum }} a^l\right)}{%
a^{6k+j+2}+x_{-4}x_{-2}x_{0}b\left(\overset{6k+j+1}{\underset{l=0}{\sum }}a^l\right)}%
,0\leq j\leq 5.
\end{equation}

\begin{theorem}
The difference equation 
\begin{equation}
x_{n+1}=\frac{x_{n}x_{n-2}x_{n-4}}{x_{n-1}x_{n-3}(a+bx_{n}x_{n-2}x_{n-4})}
\label{six}
\end{equation}%
has a periodic solution of period six if and only if $x_{-4}x_{-2}x_{0}=%
\frac{1-a}{b}$, $a\neq 1$.
\end{theorem}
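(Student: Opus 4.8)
The plan is to argue directly from the closed-form solution \eqref{solxncst}, writing $P:=x_{-4}x_{-2}x_{0}$ throughout. A solution of \eqref{six} has period six precisely when $x_{6n+j-4}=x_{j-4}$ for every $n\geq 0$ and every $j\in\{0,1,2,3,4,5\}$; since each term of the sequence is nonzero (otherwise \eqref{six} is undefined, and in particular $P\neq 0$), comparing the product in \eqref{solxncst} at consecutive values of $n$ shows that this is equivalent to demanding that every factor
\begin{equation*}
\frac{a^{6k+j}+Pb\sum_{l=0}^{6k+j-1}a^{l}}{a^{6k+j+2}+Pb\sum_{l=0}^{6k+j+1}a^{l}}
\end{equation*}
equal $1$. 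The whole problem thus reduces to deciding when this generic factor equals $1$, and the only tool required is the geometric-sum identity $\sum_{l=0}^{m}a^{l}=(a^{m+1}-1)/(a-1)$, which is legitimate because $a\neq 1$.

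For the sufficiency direction I would assume $Pb=1-a$ and substitute into the factor above. Replacing each finite sum by $(a^{m+1}-1)/(a-1)$ and using $Pb/(a-1)=-1$, the numerator collapses as $a^{6k+j}+(1-a)\frac{a^{6k+j}-1}{a-1}=a^{6k+j}-(a^{6k+j}-1)=1$, and identically the denominator collapses to $1$. Hence every factor equals $1$, every product in \eqref{solxncst} equals $1$, so $x_{6n+j-4}=x_{j-4}$ and the solution is six-periodic.

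For the necessity direction I would exploit periodicity at the single index $j=0,\ n=1$, which forces $x_{2}=x_{-4}$. By \eqref{solxncst} the corresponding product reduces to the one factor $k=0$, whose numerator is $1$ (the sum $\sum_{l=0}^{-1}a^{l}$ being empty) and whose denominator is $a^{2}+Pb(1+a)$. Setting this factor equal to $1$ gives $a^{2}+Pb(1+a)=1$, that is $(1-a)(1+a)=Pb(a+1)$; dividing by $a+1$ yields $Pb=1-a$, i.e.\ $x_{-4}x_{-2}x_{0}=(1-a)/b$, as required.

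The arithmetic above is routine; the genuine care is needed at two places. First, one must keep track of the nonvanishing of the denominators and of $P$ so that the divisions are valid. Second, and more seriously, the necessity argument divides by $a+1$, which is illegitimate when $a=-1$: there the relation $(1-a)(1+a)=Pb(a+1)$ is vacuous, and a direct check shows that every factor already equals $1$ for arbitrary $P$, so the ``only if'' part would hold trivially for \emph{all} initial data rather than forcing $P=(1-a)/b$. I therefore expect the degenerate case $a=-1$ to be the main obstacle, and it should be excluded or treated separately to keep the stated equivalence exact.
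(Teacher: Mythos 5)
Your sufficiency argument is exactly the paper's proof: substitute $x_{-4}x_{-2}x_{0}=(1-a)/b$ into \eqref{solxncst}, sum the geometric series (legitimate since $a\neq 1$), and watch every factor collapse to $1$. The paper stops there --- despite the ``if and only if'' in the statement, its proof contains no necessity argument at all --- so the second half of your proposal is genuinely new relative to the paper, and it is correct as far as it goes: periodicity forces $x_{2}=x_{-4}$, the $k=0$, $j=0$ factor then gives $a^{2}+bx_{-4}x_{-2}x_{0}(1+a)=1$, and hence $bx_{-4}x_{-2}x_{0}=1-a$ provided $a\neq -1$. Your worry about $a=-1$ is not a technicality but a genuine counterexample to the theorem as stated: the paper itself, in the later subsection on the case $a=-1$, computes $x_{6n+j-4}=x_{j-4}$ for \emph{arbitrary} initial data and explicitly remarks that ``the extra condition $x_{-4}x_{-2}x_{0}=\frac{1-a}{b}$ in the above theorem is not needed.'' So for $a=-1$ every well-defined solution is six-periodic and the ``only if'' direction fails. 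Your treatment is therefore more complete than the paper's and correctly isolates the one value $a=-1$ at which the stated equivalence breaks down; the theorem should either exclude $a=-1$ (in addition to $a=1$) or be demoted to the one-directional claim that the paper actually proves.
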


\begin{proof}
Here, $x_{-4}x_{-2}x_{0}=\frac{1-a}{b}$ and then \eqref{solxncst} simplifies
to 
\begin{align*}
x_{6n+j-4}=& x_{j-4}\underset{k=0}{\overset{n-1}{\prod }}\frac{a^{6k+j}+(1-a)%
\left(\overset{6k+j-1}{\underset{l=0}{\sum }}a^{l}\right)}{a^{6k+j+2}+(1-a)\left(\overset{6k+j+1%
}{\underset{l=0}{\sum }}a^{l}\right)},0\leq j\leq 5 \\
=& x_{j-4}\underset{k=0}{\overset{n-1}{\prod }}\frac{a^{6k+j}+(1-a)\left(\frac{%
1-a^{6k+j}}{1-a}\right)}{a^{6k+j+2}+(1-a)\left(\frac{1-a^{6k+j+2}}{1-a}\right)},0\leq j\leq 5 \\
=& x_{j-4},
\end{align*}%
for $i=0,\dots 5$.
%\newpage 
\end{proof}
%\newpage
\begin{example}
Consider the Eq. (\ref{six}) where $a=-1,$ $b=1$ and the initial conditions $%
x_{-4}=0.2,$ $x_{-3}=9,$ $x_{-2}=5,$ $x_{-1}=7,$\ $x_{0}=2$ to verify our
theoretical results. 
\begin{figure}[h]
\centering
\includegraphics[width=.6\linewidth]{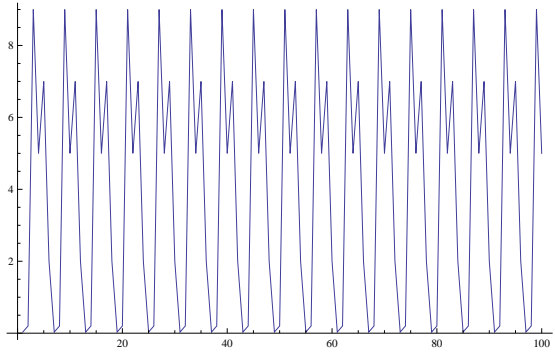}
\caption{Plot of $x_{n+1}=\frac{x_{n}x_{n-2}x_{n-4}}{%
x_{n-1}x_{n-3}(-1+x_{n}x_{n-2}x_{n-4})}$}
\end{figure}
\end{example}
\begin{theorem}
Let $\{x_n\}$ be a well-defined solution to the sequence 		
	\begin{equation}
	x_{n+1}=\frac{x_{n}x_{n-2}x_{n-4}}{%
		x_{n-1}x_{n-3}(1+bx_{n}x_{n-2}x_{n-4})}
	\end{equation}%
	with $b\neq 0$. Then 
	\begin{equation*}
	\lim _{n\rightarrow \infty} x_n =0.
	\end{equation*}%
\end{theorem}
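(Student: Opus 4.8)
The plan is to read the asymptotics straight off the closed form already obtained. Specialising \eqref{solxncst} to the constant case $a=1$, $b_n=b$ (so $a^m=1$ and $\sum_{l=0}^{m-1}a^l=m$) and writing $c=b\,x_{-4}x_{-2}x_{0}$, the solution collapses to
\[
x_{6n+j-4}=x_{j-4}\prod_{k=0}^{n-1}\frac{1+c(6k+j)}{1+c(6k+j+2)},\qquad 0\le j\le 5.
\]
Because $\{x_n\}$ is well defined, the quantity $V_0=1/(x_{-4}x_{-2}x_0)$ exists, forcing $x_{-4}x_{-2}x_0\neq 0$ and hence $c\neq 0$; moreover none of the linear factors $1+cm$ occurring above may vanish. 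Thus every partial product is a finite nonzero number and the whole problem reduces to understanding the tail of $P_j(n)=\prod_{k=0}^{n-1}\frac{1+c(6k+j)}{1+c(6k+j+2)}$.

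First I would record the identity $\frac{1+c(6k+j)}{1+c(6k+j+2)}=1-\frac{2c}{1+c(6k+j+2)}$ and observe that $1+c(6k+j+2)\sim 6ck$ as $k\to\infty$, so the factor $c$ cancels and each factor equals $1-\frac{1}{3k}+O(k^{-2})$, independently of the sign of $c$. In particular the factors are eventually positive and strictly less than $1$. Taking logarithms, $\log\frac{1+c(6k+j)}{1+c(6k+j+2)}=-\frac{1}{3k}+O(k^{-2})$, whence $\log P_j(n)=-\tfrac13\ln n+O(1)\to-\infty$ by divergence of the harmonic series. Therefore $P_j(n)\to 0$ and $x_{6n+j-4}=x_{j-4}P_j(n)\to 0$. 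Since this holds for each residue $j\in\{0,1,\dots,5\}$ and the six arithmetic subsequences $\{x_{6n+j-4}\}_n$ exhaust the whole sequence, we conclude $\lim_{n\to\infty}x_n=0$.

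If an explicit decay rate is desired, I would instead write each numerator and denominator product of linear factors as a ratio of Gamma functions, e.g. $\prod_{k=0}^{n-1}\bigl(1+c(6k+j)\bigr)=(6c)^n\,\Gamma\!\bigl(n+\tfrac{1+cj}{6c}\bigr)/\Gamma\!\bigl(\tfrac{1+cj}{6c}\bigr)$, and similarly for the denominator with $j$ replaced by $j+2$. Since $\tfrac{1+c(j+2)}{6c}-\tfrac{1+cj}{6c}=\tfrac13$, this yields $P_j(n)=C_j\,\Gamma(n+A_j)/\Gamma(n+A_j+\tfrac13)\sim C_j\,n^{-1/3}$, reconfirming $P_j(n)\to 0$ and giving the rate $n^{-1/3}$.

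The only place needing genuine care is the bookkeeping around the sign of $c$. When $c>0$ every factor already lies in $(0,1)$ and the argument is immediate; when $c<0$ the linear factors $1+cm$ change sign exactly once, so I must argue that this alters only finitely many factors and that the tail is a product of honestly positive terms below $1$, which is what legitimises the logarithmic estimate. Those finitely many initial factors contribute only a fixed nonzero constant and cannot change a limit that the harmonic-series divergence already forces to be $0$.
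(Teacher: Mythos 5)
Your proposal is correct and follows essentially the same route as the paper's proof: rewrite each factor as $1-\frac{2c}{1+c(6k+j+2)}$, observe it behaves like $1-\frac{1}{3k}+O(k^{-2})$, take logarithms, and invoke the divergence of the harmonic series to force the product to $0$. Your added care about the sign of $c=bx_{-4}x_{-2}x_0$ (the paper tacitly assumes $1+(6k+j+2)c\to+\infty$) and the optional Gamma-function computation of the $n^{-1/3}$ decay rate are welcome refinements, but they do not change the underlying argument.
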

\begin{proof}
Using \eqref{solxncst}, with $a=1$, we have 
	\begin{align*}
	x_{6n+j-4}=& x_{j-4}\underset{k=0}{\overset{n-1}{\prod }}\frac{1+(6k+j)bx_{-4}x_{-2}x_{0}}{1+(6k+j+2)bx_{-4}x_{-2}x_{0}} \\
	=& x_{j-4}\underset{k=0}{\overset{n-1}{\prod }}\left[1-\frac{2bx_{-4}x_{-2}x_{0}}{1+(6k+j+2)bx_{-4}x_{-2}x_{0}} \right].
	\end{align*}%
	Clearly, $1+(6k+j+2)bx_{-4}x_{-2}x_{0} \rightarrow \infty$ as $k \rightarrow \infty$. Therefore, we can always find a sufficiently large $n_0\in \mathbb{N}$ such that for all $k>n_0$, we have 
	\begin{align*}
	1+(6k+j+2)bx_{-4}x_{-2}x_{0} \simeq (6k+j+2)bx_{-4}x_{-2}x_{0}.
	\end{align*}
	So
	\begin{align*}
	x_{6n+j-4}=& x_{j-4}\Gamma(n_0)\underset{k=n_0+1}{\overset{n-1}{\prod }}\left(1-\frac{2}{6k+j+2} \right) \\
	=& x_{j-4}\Gamma(n_0)\underset{k=n_0+1}{\overset{n-1}{\prod }}\exp \left[\ln\left(1-\frac{2}{6k+j+2} \right)\right]. 
	\end{align*}
	Note that 
\begin{align}
\Gamma(n_0)=\underset{k=0}{\overset{n_0}{\prod }}\left(1-\frac{2}{6k+j+2} \right).
\end{align}
Now, using the fact that $\ln(1+x)=x+O(x^2)$ for small $x$, we obtain
\begin{align*}
x_{6n+j-4}	=& x_{j-4}\Gamma(n_0)\underset{k=n_0+1}{\overset{n-1}{\prod }}\exp \left[-\frac{2}{6k+j+2} +O\left(\frac{1}{(6k+j+2)^2}\right)\right] \\
	=&  x_{j-4}\Gamma(n_0)\exp \left[-\underset{k=n_0+1}{\overset{n-1}{\sum }}\left(\frac{2}{6k+j+2}\right) \right]    
	\underset{k=n_0+1}{\overset{n-1}{\prod }}\exp \left[O\left(\frac{1}{(6k+j+2)^2}\right)\right],
	\end{align*}
$j=0,\dots,5$,	and hence $x_n \rightarrow 0$ as $n \rightarrow \infty$.
%	\begin{equation*}
%	\lim _{n\rightarrow \infty} x_n =0.
%	\end{equation*}
\end{proof}
\\ \\
\par \noindent 
Next, we deal with the cases $a=\pm 1$ and $b=\pm 1.$
\subsubsection{\noindent Case : $a=1$ and $b=\pm 1.$}
\bigskip\ We can verify our results from [\ref{bib.Elsayed1}] (see Theorems
1 and 6) that
\begin{equation*}
x_{6n+j-4}=x_{j-4}\underset{k=0}{\overset{n-1}{\prod }}\frac{%
1+b(6k+j)x_{-4}x_{-2}x_{0}}{1+b(6k+j+2)x_{-4}x_{-2}x_{0}}.
\end{equation*}%
Clearly, in terms of the initial values $x_{-4},x_{-3},x_{-2},\ x_{-1},\
x_{0}$, we have 
\begin{equation*}
x_{6n-4}=x_{-4}\prod_{k=0}^{n-1}\frac{1+6bkx_{-4}x_{-2}x_0}{%
1+(6k+2)bx_{-4}x_{-2}x_{0}},
\end{equation*}%
\begin{equation*}
x_{6n-3}=x_{-3}\prod_{k=0}^{n-1}\frac{1+(6k+1)bx_{-4}x_{-2}x_0}{%
1+(6k+3)bx_{-4}x_{-2}x_{0}},
\end{equation*}%
\begin{equation*}
x_{6n-2}=x_{-2}\prod_{k=0}^{n-1}\frac{1+(6k+2)bx_{-4}x_{-2}x_0}{%
1+(6k+4)bx_{-4}x_{-2}x_{0}},
\end{equation*}%
\begin{equation*}
x_{6n-1}=x_{-2}\prod_{k=0}^{n-1}\frac{1+(6k+3)bx_{-4}x_{-2}x_0}{%
1+(6k+5)bx_{-4}x_{-2}x_{0}},
\end{equation*}%
\begin{equation*}
x_{6n}=x_{0}\prod_{k=0}^{n-1}\frac{1+(6k+4)bx_{-4}x_{-2}x_0}{%
1+(6k+6)bx_{-4}x_{-2}x_{0}}
\end{equation*}
and 
\begin{equation*}
x_{6n+1}=\frac{x_{-4}x_{-2}x_0}{x_{-1}x_{-3}(1+bx_{-4}x_{-2}x_0)}%
\prod_{k=0}^{n-1}\frac{1+(6k+5)bx_{-4}x_{-2}x_0}{1+(6k+7)bx_{-4}x_{-2}x_{0}%
}.
\end{equation*}

\begin{example}
Consider the Eq. (\ref{six}) where $a,$ $b=1$ and the initial conditions $%
x_{-4}=-0.2,$ $x_{-3}=3,$ $x_{-2}=1.3,$ $x_{-1}=0.7,$\ $x_{0}=-2$ to verify
our theoretical results. 
\begin{figure}[h]
\centering
\includegraphics[width=.6\linewidth]{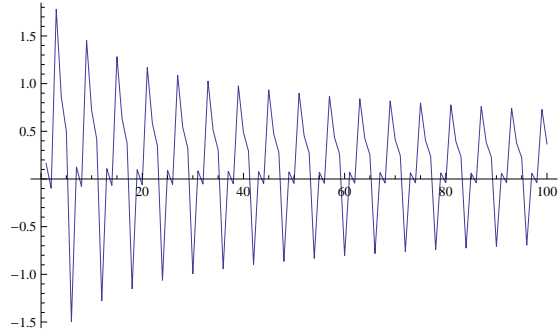}
\caption{Plot of $x_{n+1}=\frac{x_{n}x_{n-2}x_{n-4}}{%
x_{n-1}x_{n-3}(1+x_{n}x_{n-2}x_{n-4})}$}
\end{figure}
\end{example}

\subsubsection{Case : $a=-1$ and $b=\pm 1.$}

In this case, the solution becomes%
\begin{eqnarray*}
x_{6n+j-4} &=&x_{j-4}\prod_{k=0}^{n-1}\frac{(-1)^{j}+bx_{-4}x_{-2}x_{0}\left(%
\dfrac{1-(-1)^{j}}{2}\right)}{(-1)^{j}+bx_{-4}x_{-2}x_{0}\left(\dfrac{1-(-1)^{j}}{2}\right)}
\\
&=&x_{j-4}
\end{eqnarray*}%
where $j=0,1,2,3,4,5$.

\noindent Therefore, in this case it is easy to see that the solutions are
periodic with period six. It is clear that, here, the extra condition $%
x_{-4}x_{-2}x_0=\frac{1-a}{b}$ in the above theorem is not needed. We can
check our results from [\ref{bib.Elsayed1}] (see Theorems 3 and 8).

\section{Conclusion}

Lie symmetry generators of difference equations of the form \eqref{kov} 
%\begin{equation*}
%u_{n+5}=\frac{u_{n}u_{n+2}u_{n+4}}{%
%u_{n+1}u_{n+3}(A_{n}+B_{n}u_{n}u_{n+2}u_{n+4})}
%\end{equation*}%
were obtained, which in turn were useful in finding the solutions to the
difference equation  \eqref{xn}. %
%\begin{equation*}
%x_{n+1}=\frac{x_{n}x_{n-2}x_{n-4}}{%
%x_{n-1}x_{n-3}(a_{n}+b_{n}x_{n}x_{n-2}x_{n-4})}.
%\end{equation*}%
Closed form formulas for the solutions were given and specific cases
exhibited. Furthermore, periodic nature and behavior of solutions  for some special cases were discussed.

\end{document}